\newtheorem{theorem}{Theorem}
\newtheorem{conjecture}{Conjecture}[section]
\newtheorem{proposition}[conjecture]{Proposition}
\theoremstyle{definition}
\newtheorem{definition}{Definition}[section]
\newtheorem{remark}[definition]{Remark}
\newcommand{\C}{\mathbb{C}}
\newcommand{\Z}{\mathbb{Z}}
\newcommand{\R}{{(r)}}
\newcommand{\g}{\mathfrak{g}}
\newcommand{\h}{\mathfrak{h}}
\renewcommand{\epsilon}{\varepsilon}
\renewcommand{\H}{\mathfrak{H}}
\newcommand{\V}{\mathfrak{V}}
\DeclareMathOperator{\id}{id}
\DeclareMathOperator{\End}{End}
\DeclareMathOperator{\Aut}{Aut}
\DeclareMathOperator{\spa}{span}
\title{Indecomposable Hopf $*$-algebra representations with invariant inner product}
\author{Quinn T. Kolt}
\email[corresponding author]{quinn@math.ucsb.edu}
\author{Ziqian Zhao}
\email{ziqian\_zhao@ucsb.edu}
\address{Department of Mathematics, University of California, Santa Barbara, CA 93106, USA}
\begin{document}

\begin{abstract}
    We generalize a result of Araki (1985) on indecomposable group representations with invariant (necessarily indefinite) inner product and irreducible subrepresentation to  Hopf $*$-algebras. Moreover, we characterize invariant inner products on the projective indecomposable representations of small quantum groups $U_q sl(2)$ at odd roots of unity and on the indecomposable representations of generalized Taft algebras $H_{n,d}(q)$.
\end{abstract}
\maketitle
\tableofcontents

\section{Introduction}
    Classically, when studying quantum symmetry, one considers unitary operators which commute with the Hamiltonian $\hat H$ of a quantum system $(\H, \hat H)$. Given a Lie group $G$, one says the system is \textit{$G$-invariant} if there is a unitary representation $\pi:G\to\Aut(\H)$ on the state space $\H$ such that $ \pi(g)^\dagger\hat H\pi(g) = \hat H$ for all $g\in G$. Define a Hermitian form $\langle v,w\rangle:= \langle v| \hat H|w\rangle$ for $v,w\in\H$, where $\langle -|- \rangle$ is the Hilbert space inner product on $\H$. The commutation relation $\pi(g)^\dagger\hat H\pi(g) = \hat H$ is then easily seen to be equivalent to the property that $\langle\pi(g)v,\pi(g)w\rangle = \langle v, w\rangle$ for all $v,w\in\H$ and $g\in G$. Thus, the classical notion of quantum symmetry can be captured by $G$-invariance of a Hermitian form. 

    Given the recent mathematical progress in non-semisimple quantum field theory (see, for instance, \cite{10.1112/jtopol/jtu006, Creutzig2024QFT, kerler2001non}), one might expect a corresponding notion of non-semisimple quantum symmetry. We may ask that the indefinite inner product $\langle -,-\rangle$ be instead preserved by some non-semisimple representation. Such a representation cannot be unitary. Nevertheless, the famous Gupta-Bleuler formalism in quantum electrodynamics (QED) was developed in such a setting \cite{Bleuler1950,Gupta_1950}. The formalism constructs a positive definite inner product from the indefinite inner product via a suitable restriction and quotient. While the rigorous foundations of QED remain unclear, indecomposable representations with invariant indefinite inner products have seen considerable applications within this formalism (e.g., \cite{PhysRevD.57.6230,gazeau2000gupta,Gazeau:1984bs}). Araki (1985) develops an outstanding mathematical theory with such inner products, treating a Gupta-Bleuler triplet as a special case \cite{Araki85}. Our main result is a non-invertible generalization of \cite[Thm. 1]{Araki85}, which may be applicable to systems with more complex symmetries than QED.
    
    A sesquilinear form on a complex vector space $\H$ is a map $\langle-,-\rangle:\H\times \H\to \C$ which is conjugate linear in the first coordinate and linear in the second. We may interpret this as a linear map $\langle-,-\rangle:\bar \H\otimes \H\to \C$, where $\bar \H$ is the conjugate vector space of $\H$, whose scalar action is defined by $\lambda\cdot_{\bar \H} v := \bar\lambda\cdot_\H v$ for $v\in \bar \H=\H$. Thus, $\langle-,-\rangle\in (\bar \H\otimes \H)^\vee$. Following \cite{Bais_2003}, given a representation $\pi:A\to\End(\H)$ of a Hopf algebra $A=(A, \nabla, 1, \Delta, \epsilon, S)$, we say a vector $v\in \H$ is $A$-invariant if $\pi(h)v = \epsilon(h)v$ for all $h\in A$. Thus, for a proper notion of invariance of $\langle -,-\rangle$, there must be an $A$-action on $(\bar \H\otimes \H)^\vee$. For a general Hopf algebra, there is no obvious representation on $\bar \H$. Hopf $*$-algebras $A$ provide the necessary structure to define such a representation. We study (possibly non-semisimple) Hopf $*$-algebras and their associated notion of symmetry. This notion simultaneously generalizes (indefinite) inner products invariant under the action of groups and of real Lie algebras. It may be possible to further extend our results to inner products on objects in bar categories, as defined by \cite{BeggsMajid06}.

    In Section \ref{sec:araki}, we review Hopf $*$-algebras and some notions from \cite{Araki85}. We then generalize \cite[Thm. 1]{Araki85} on a form of certain non-semisimple representations with invariant inner product and an irreducible subrepresentation from the group case to the Hopf $*$-case. In Section \ref{sec:ex}, we provide nontrivial examples of our theory. We study inner products on indecomposable representations of small quantum groups $U_q sl(2)$ at odd roots $q$ of unity and generalized Taft algebras $H_{n,d}(q)$. We show that every projective indecomposable $U_q sl(2)$-module has a compatible non-degenerate inner product. For some choices of $n, d$, no indecomposable $H_{n,d}(q)$-modules have a compatible non-degenerate inner product. This is in particular true for Sweedler's 4-dimensional Hopf algebra. In general, only certain indecomposable $H_{n,d}(q)$-modules have a compatible non-degenerate inner product.
    
\section{Inner products invariant under the action of a Hopf $*$-algebra}\label{sec:araki}

\begin{definition}
    A \textit{Hopf $*$-algebra} is a tuple $A=(A, \nabla, 1, \Delta, \varepsilon, S, *)$, where $(A, \nabla, 1, \Delta, \varepsilon, S)$  is a Hopf algebra and $*:A\to A$ is an conjugate linear involution such that
    \begin{enumerate}[label=(\roman*)]
        \item $1^* = 1$,
        \item $(hk)^* = k^*h^*$ for all $h,k\in A$, 
        \item $\Delta(h^{*}) = (*\otimes *)\circ\Delta(h)$ for all $h\in A$,
        \item $S(S(h)^*)^* = h$ for all $h\in A$.
    \end{enumerate}
\end{definition}
Axioms (i) and (ii) alone define a unital $*$-algebra. Axioms (iii) and (iv) are not necessary for our main result Theorem \ref{thm:araki}. However, it follows from these axioms that $\epsilon(h^*) = \overline{\epsilon(h)}$ and the antipode $S$ is invertible with $S^{-1}=*\circ S\circ *$.

Throughout this section, let $A$ be a Hopf $*$-algebra (which need not be finite-dimensional). We will make use of sum-less Sweedler's notation, writing $\Delta(x)=x^{(1)}\otimes x^{(2)}$. Now, we recall and generalize some notions from \cite{Araki85} to the Hopf $*$-case. In this article, we are primarily interested indecomposable representations $\pi:A\to\End(\H)$ on a complex vector space $\H$ which may be written in the following form:
\begin{equation}
  \left.\begin{aligned}
    \pi_n\to\pi_{n-1}\to \cdots \to \pi_1\\
    \H=\H_n\supset \H_{n-1}\supset \cdots\supset \H_1
\end{aligned}\right\},\label{decomposition}
\end{equation}
where $\H_j$ is a $A$-invariant subspace of $\H_{j+1}$ without an $A$-invariant complement, i.e., there is no $A$-invariant subspace $\H_j' \subset \H_{j+1}$ such that $\H_j \oplus \H_j' = \H_{j+1}$. For $j=2,\dots, n$, $\pi_j:A\to\End(\H_j/\H_{j-1})$ is the representation on $\H_j/\H_{j-1}$ induced by $\pi$. Finally, $\pi_1:A\to\End(\H_1)$ is the representation on $\H_1$ induced by $\pi$.

\begin{definition}
    Let $\pi:A\to\End(\H)$ be a representation of a Hopf $*$-algebra $A$ on a complex vector space $\H$. Let $\bar\H$ be the conjugate vector space to $\H$. The \textit{conjugate representation} $\bar \pi:A\to\End(\bar\H)$ is defined by $\bar \pi(h) v := \pi(S(h)^*)v$ for $h\in A$ and $v\in \bar \H=\H$.
\end{definition}

\begin{definition}
    Let $\pi:A\to\End(\H)$ be a representation of a Hopf $*$-algebra $A$ on a complex vector space $\H$. Let $\langle-,-\rangle:\bar \H\otimes \H \to \C$ be a Hermitian form on $\H$. We say that $\langle-,-\rangle$ is \textit{$A$-invariant} if 
    $$((\bar\pi\otimes\pi)^\vee(h)\cdot \langle-,-\rangle)(\xi\otimes\eta) = \langle \pi(S^2(h^{(2)})^*)\xi,\pi(S(h^{(1)}))\eta\rangle=\varepsilon(h)\langle\xi,\eta\rangle$$ 
    for all $h\in A$ and $\xi,\eta \in \H$. We say $\langle -,-\rangle$ is an \textit{inner product} if it is non-degenerate. (We do not assume $\langle -,-\rangle$ is positive definite.) 
\end{definition}

\begin{remark}
    Let $G$ be a group. Then, the Hopf algebra $\C[G]$ has a natural $*$ generated by
    \begin{align*}
        g^{*} = g^{-1},
    \end{align*}
    for all $g\in G$. Let $\pi:G\to\Aut(\H)$ be a representation of $G$. With this $*$-structure, the notions of $G$-invariant (i.e., $\langle \pi(g) v, \pi(g)w\rangle = \langle v, w\rangle$) Hermitian forms and $\C[G]$-invariant Hermitian forms coincide.
    
    Let $\g$ be a complex Lie algebra. For any real Lie subalgebra $\h\subset \g$ for which $\g$ decomposes as $\g=\h\oplus i\h$, there is a natural $*$ on the Hopf algebra $U(\g)$, which is generated by
    \begin{align*}
         (x_r + ix_i)^* = -x_r + ix_i,
    \end{align*}
    for all $x_r, x_i\in \h$. In fact, $*$-structures are in bijection with such decompositions \cite[Sec. V.9]{kassel}. In particular, if $\h$ is a real Lie algebra and $\pi:\h\to\End(\H)$ is a representation, then there is a natural correspondence between $\h$-invariant (i.e., $\langle \pi(x)v, w\rangle = -\langle v, x\cdot w\rangle$) real symmetric forms and $U(\h\oplus i\h)$-invariant Hermitian forms (with the induced $*$-structure from $\h$).

    In both the group and Lie algebra cases, the $*$ and $S$ operations commute. However, this is not true in general. In fact, this is only true when $S^2 = \id_A$. Consequently, in general, there are many possible notions of $A$-invariant inner products which depend on the definition of $\bar \H$. 
    Regardless of this choice, Theorem \ref{thm:araki} still holds.
\end{remark}

\begin{proposition}[\cite{BeggsMajid06}]\label{prop:beggs}
    Let $\pi:A\to\End(\H)$ be a representation of a Hopf $*$-algebra $A$ on a complex vector space $\H$. Let $\langle-,-\rangle:\bar \H\otimes \H\to \C$ be a Hermitian form. Then, the following are equivalent:
    \begin{enumerate}[label=(\roman*)]
        \item The element $\langle-,-\rangle\in (\bar \H\otimes \H)^\vee$ is $A$-invariant, i.e., for any $h\in A$,
        $$(\bar\pi\otimes \pi)^\vee (h) \cdot \langle -,-\rangle = \langle \pi(S^2(h^{(2)})^*)\cdot -,\pi(S(h^{(1)}))\cdot -\rangle = \epsilon(h)\langle -, -\rangle;$$
        \item The map $\langle-,-\rangle:\bar \H\otimes \H\to \C_\epsilon$ is $A$-linear (where $\C_\epsilon$ is the trivial representation), i.e., for any $h\in A$, $\bar m\in \bar \H$ and $m\in \H$,
        $$\langle (\bar\pi\otimes \pi) (h)\cdot (\bar m\otimes m)\rangle = \langle \pi(S(h^{(1)})^*)\bar m,\pi(h^{(2)}) m\rangle = \epsilon(h)\langle \bar m, m\rangle;$$
        \item $*$ agrees with the adjoint operation of $\langle-,-\rangle$, i.e., for any $h\in A$, $\bar m\in \bar \H$ and $m\in \H$, 
        $$\langle \pi(h^*)\bar m, m\rangle = \langle \bar m, \pi(h)m\rangle.$$
    \end{enumerate}
\end{proposition}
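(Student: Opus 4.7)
The plan is to prove (i) $\Leftrightarrow$ (ii) via substitution of the antipode, and (ii) $\Leftrightarrow$ (iii) through short Sweedler-notation calculations using the antipode and counit axioms. For the first equivalence, observe that (i) and (ii) differ only by applying $S$ (or $S^{-1}$) to the free variable $h$. Starting from (ii) and substituting $h \to S(h)$, then applying $\Delta \circ S = \tau \circ (S \otimes S) \circ \Delta$ (with $\tau$ the flip) together with $\epsilon \circ S = \epsilon$, yields (i). The reverse direction uses $h \to S^{-1}(h)$, which is valid because axiom (iv) implies $S^{-1} = {*} \circ S \circ {*}$.

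For (iii) $\Rightarrow$ (ii), I would apply (iii) with $h$ replaced by $S(h^{(1)})$ and $m$ replaced by $\pi(h^{(2)})m$, and sum over $\Delta h$ to obtain $\sum \langle \pi(S(h^{(1)})^*) \bar m, \pi(h^{(2)}) m\rangle = \sum \langle \bar m, \pi(S(h^{(1)}) h^{(2)}) m\rangle = \epsilon(h)\langle \bar m, m\rangle$, using the antipode identity $\sum S(h^{(1)}) h^{(2)} = \epsilon(h)1$ in the final step. Conversely, for (ii) $\Rightarrow$ (iii), I would apply (ii) with $h \to h^{(1)}$ and with the vector in the second slot replaced by $\pi(S(h^{(2)})) m''$, then sum over $\Delta h$ and use coassociativity to rewrite in terms of $\Delta^2 h$. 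The right-hand side collapses to $\langle \bar m, \pi(S(h)) m''\rangle$ by the counit, while the left-hand side collapses to $\langle \pi(S(h)^*) \bar m, m''\rangle$ via the tensor identity $\sum h^{(1)} \otimes h^{(2)} S(h^{(3)}) = h \otimes 1$ in $A \otimes A$. Substituting $h \to S^{-1}(h)$ then gives (iii).

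The main care required, in the (ii) $\Rightarrow$ (iii) direction, is to recognize that the function $(x,y) \mapsto \langle \bar\pi(x) \bar m, \pi(y) m''\rangle$ is genuinely $\C$-bilinear on $A \otimes A$: this holds because $\bar\pi$ is a $\C$-linear representation of $A$ on $\bar\H$ (whose $\C$-scalar action is conjugate to that on $\H$, by construction), and $\langle -, -\rangle$ is $\C$-linear as a morphism $\bar\H \otimes \H \to \C$. With that interpretation, the tensor identity in $A \otimes A$ transfers faithfully to the pairing, and the remainder is routine Hopf-algebraic bookkeeping.
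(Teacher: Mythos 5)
Your proof is correct. Note that the paper itself offers no argument for this proposition --- it is quoted from Beggs--Majid --- so there is no internal proof to compare against; your write-up is a correct standalone verification. The two substitutions you use are exactly the right ones: (i) and (ii) are related by $h \mapsto S(h)$ via the anti-coalgebra property $\Delta \circ S = \tau \circ (S\otimes S)\circ \Delta$ and $\epsilon\circ S = \epsilon$, with invertibility of $S$ (from axiom (iv), $S^{-1} = *\circ S\circ *$) giving the converse; and the passage between (ii) and (iii) is the standard ``rigidity'' manipulation with $\sum S(h^{(1)})h^{(2)} = \epsilon(h)1$ and $\sum h^{(1)}\otimes h^{(2)}S(h^{(3)}) = h\otimes 1$. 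You correctly isolate the one genuinely delicate point: the map $x\mapsto \langle \pi(S(x)^*)\bar m, m\rangle$ is $\C$-linear because the conjugate-linearity of $*$ cancels against the conjugate-linearity of the first slot of the Hermitian form (equivalently, $\bar\pi$ is a $\C$-linear representation on $\bar\H$), so the Sweedler sums and the tensor identity in $A\otimes A$ transfer through the pairing. The same linearity observation is also what justifies summing over $\Delta h$ in your (iii) $\Rightarrow$ (ii) direction; it would be worth stating it there too rather than only at the end, but this is a presentational point, not a gap.
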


Looking at Proposition \ref{prop:beggs}(iii), one might wonder if the assumption of having a Hopf algebra structure is necessary at all. In the constructions discussed here, it is indeed unnecessary. Everything may be reformulated purely in terms of $*$-algebras. However, there are many other ways one might generalize our results. For example, $\bar\pi' = \pi\circ *\circ S^{n}$ for odd $n$ defines another notion of conjugate representation, which induces a distinct notion of invariant inner product. Theorem \ref{thm:araki} still applies in this case with minimal changes to its proof. It seems natural to also look for a further extension to bar categories, introduced in \cite{BeggsMajid06}. We focus on Hopf $*$-algebras with this notion of conjugate representation for simplicity and to align with our desired application to non-invertible symmetry.

\begin{definition}\label{def:conjugation}
    Let $A$ be a Hopf $*$-algebra. Representations $\rho_1:A\to \End(\V_1)$ and $\rho_2:A\to\End(\V_2)$ are \textit{conjugate} if there is a sesquilinear form $\langle-,-\rangle:\bar\V_2\otimes\V_1\to\C$, such that
    \begin{enumerate}
        \item $\V_1$ and $\V_2$ separate each other, i.e., $\langle\xi,\eta\rangle = 0$ for all $\eta\in \V_1$ implies $\xi=0$ and $\langle\xi,\eta\rangle = 0$ for all $\xi\in \V_2$ implies $\eta=0$;
        \item the form is $A$-invariant, i.e., the following holds for all $\eta \in \V_1, \xi \in \V_2$, and $h\in A$:
    \begin{equation}
         \langle \rho_2(S^2(h^{(2)})^{*})\xi,\rho_1(S(h^{(1)}))\eta\rangle = \varepsilon(h)\langle \xi,\eta\rangle.
    \end{equation}
    \end{enumerate}
\end{definition}

\begin{definition}
    Let $\pi:A\to\End(\H)$ be a representation of a Hopf $*$-algebra $A$ on a complex vector space $\H$ with an $A$-invariant Hermitian form $\langle-,-\rangle:\bar\H\otimes\H\to\C$. Let $\H_1 \subset \H$ be an $\pi(A)$-invariant subspace. The \textit{polar} $\H_1^\perp$ of $\H_1$ in $\H$ is
    $$\H_1^\perp = \{\xi \in \H | \langle\xi,\eta\rangle = 0\text{ for all }\eta \in \H_1\}.$$
    $\H_1$ is said to be \textit{closed} if $\H_1 = (\H_1^\perp)^\perp$. A representation $\pi$ of $A$ on $\H$ is \textit{topologically indecomposable} if there are no non-zero closed $A$-invariant subspaces $\H_1$ and $\H_2$ of $\H$ such that $((\H_1+\H_2)^\perp)^\perp = \H$ and $\H_1 \cap \H_2 = 0$.
\end{definition}
\begin{definition}
    Let $\pi:A\to\End(\H)$ be a representation of a Hopf $*$-algebra $A$ on a complex vector space $\H$. Let $\H_1$ be a closed $\pi(A)$-invariant subspace of $\H$. A $\pi(A)$-invariant subspace $\H_2\subset \H$ is a \textit{topological complement} of $\H_1$ if $((\H_1+\H_2)^\perp)^\perp = \H$, $\H_1\cap \H_2 = 0$.
\end{definition}
\begin{theorem}\label{thm:araki}
    Let $\pi:A\to\End(\H)$ be a representation of a Hopf $*$-algebra $A$ with an $A$-invariant inner product. Let $\H_1$ be an $\pi(A)$-invariant closed subspace of $\H$ such that the restriction $\pi_1$ of $\pi$ to $\H_1$ is irreducible and there are no $A$-invariant, closed topological complements of $\H_1$ in $\H$. Then,
    \begin{enumerate}
        \item $\H_1$ is a null space for the Hermitian form ($\xi,\eta \in \H_1$ implies $\langle \xi,\eta\rangle = 0$),
        \item $\pi$ is of the form \eqref{decomposition} with $n = 2$ or $3$,
        \item $\pi_n$ on $\H_n/\H_{n-1}$ is conjugate to $\pi_1$ on $\H_1$,
        \item $\H_{2}/\H_1$ has an $A$-invariant inner product.
    \end{enumerate}
\end{theorem}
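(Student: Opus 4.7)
The plan is to analyze the polar $V := \H_1^\perp$, which is automatically $A$-invariant by $A$-invariance of the form and closed since any polar satisfies $((W^\perp)^\perp)^\perp = W^\perp$. The position of $V$ relative to $\H_1$ and to $\H$ then dictates both the filtration structure and the existence of the desired pairings.

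For (1), consider the radical $R := \H_1 \cap V$ of the restriction of $\langle-,-\rangle$ to $\H_1$. It is $A$-invariant, so irreducibility of $\pi_1$ forces $R = 0$ or $R = \H_1$. If $R = 0$, then $V$ is a closed $A$-invariant topological complement of $\H_1$: closedness of $\H_1$ gives $V^\perp = (\H_1^\perp)^\perp = \H_1$, so $(\H_1 + V)^\perp = \H_1^\perp \cap V^\perp = V \cap \H_1 = 0$, whence $((\H_1 + V)^\perp)^\perp = 0^\perp = \H$ by non-degeneracy. This contradicts the hypothesis, forcing $R = \H_1$ and yielding (1). Non-degeneracy together with $\H_1 \neq 0$ also forces $V \subsetneq \H$ (else $\H_1 = \H^\perp = 0$), leaving two cases: $V = \H_1$ (take $n = 2$, $\H_2 = \H$) or $\H_1 \subsetneq V \subsetneq \H$ (take $n = 3$, $\H_2 = V$, $\H_3 = \H$), giving the filtration in (2). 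The restriction of $\langle-,-\rangle$ to $\bar\H_1 \otimes \H$ descends through $V$ to a non-degenerate $A$-invariant sesquilinear pairing $\bar\H_1 \otimes (\H/V) \to \C$; non-degeneracy on the $\H_1$ side uses that of the ambient form, and on the $\H/V$ side follows from $V = \H_1^\perp$. By Definition \ref{def:conjugation} this realizes the conjugacy of $\pi_n$ on $\H_n/\H_{n-1} = \H/V$ with $\pi_1$, giving (3).

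For (4) in the case $n = 3$, the form on $V$ has radical $V \cap V^\perp = V \cap \H_1 = \H_1$, so it descends to a non-degenerate $A$-invariant Hermitian form on $V/\H_1 = \H_2/\H_1$. In the case $n = 2$, $\H_1$ is its own polar and the ambient form does not descend to $\H_2/\H_1 = \H/\H_1$; here one must instead exhibit an invariant inner product directly, exploiting the irreducibility of $\H/\H_1$ (inherited from $\H_1$ via conjugacy) together with the conjugation pairing established in (3).

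I expect the main obstacle to be the $n = 2$ instance of (4), since the ambient form gives no direct descent and an invariant inner product must be constructed from auxiliary structure on $\H/\H_1$. A further subtlety running through the no-complement clauses of (2) is bridging the stated hypothesis (no closed \emph{topological} complement) with the algebraic no-complement condition in \eqref{decomposition}; this I would handle by showing any $A$-invariant algebraic complement of $\H_1$ (or $\H_2$) is automatically closed via non-degeneracy applied to its polar, whereupon it becomes a closed topological complement and contradicts the hypothesis.
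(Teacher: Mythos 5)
Your proposal follows the same route as the paper's proof (which itself defers most details to Araki): form the polar $\H_1^\perp$, use irreducibility of $\pi_1$ together with the no-closed-topological-complement hypothesis to show $\H_1\cap\H_1^\perp=\H_1$, split into the cases $\H_1^\perp=\H_1$ and $\H_1\subsetneq\H_1^\perp\subsetneq\H$, and realize the conjugacy in (3) via the pairing between $\H/\H_1^\perp$ and $\H_1$ descended from the ambient form. Your treatment of (1), (2), and the $n=3$ instance of (4) is correct and in fact more explicit than the paper's.

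The obstacle you flag in the $n=2$ instance of (4) dissolves once you adopt the paper's indexing: in the proof, $\H_2$ is \emph{defined} to be $\H_1^\perp$, so when $n=2$ one has $\H_2=\H_1$ and the space $\H_2/\H_1$ in conclusion (4) is zero, making the claim vacuous --- no invariant inner product on $\H/\H_1$ needs to be produced (and indeed one should not expect one in general, since $\H/\H_1$ is only conjugate to $\H_1$, not necessarily self-conjugate). The clash with the labeling $\H=\H_n$ in \eqref{decomposition} is an infelicity inherited from Araki. Two minor points: the pairing realizing Definition \ref{def:conjugation} should be conjugate-linear in the $\H/\H_1^\perp$ slot, i.e.\ a form on $\overline{\H/\H_1^\perp}\otimes\H_1$ rather than $\overline{\H_1}\otimes(\H/\H_1^\perp)$, though Hermitian symmetry of the ambient form lets you pass between the two; and the loose end you note about upgrading algebraic invariant complements to closed topological ones is real, but it is equally left implicit in the paper, which cites Araki for it.
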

\begin{proof}

The proof is analogous to \cite[Thm. 1]{Araki85} with the exception of the notion of a conjugate representation of a Hopf $*$-algebra (Definition \ref{def:conjugation}).
Aligning with the notation of \cite[Thm. 1]{Araki85}, let $\H_2 = \H_1^\perp$ and $\H_1^\# \equiv \H/\H_2.$ Let $\xi^\# = \xi+\H_2 \in \H/\H_2$, and define $\pi^\#:A\to \End(\H/\H_2)$ by $\pi^\#(h)(\xi^\#) = (\pi(h)(\xi))^\#$.  Further, define $\langle \xi^\#,\eta\rangle := \langle\xi,\eta\rangle$ for $\xi\in \H, \eta \in \H_1$. This is well-defined as $\H_1$ is $\pi(A)$-invariant. Note that, if $\langle \xi^\#,\eta\rangle = 0$ for all $\xi^\#\in \H_1^\#$, then $\eta = 0$; on the other hand, if $\langle \xi^\#,\eta\rangle = 0$ for all $\eta\in \H_1$, then $\xi \in \H_1^\perp$ and $\xi^\# = 0$. Thus, $\H_1^\#$ and $\H_1$ separate each other. 

By $A$-invariance of the inner product, we have, for any $h\in A, \xi\in \H, \eta\in \H_1$
\begin{align*}
    \langle \pi_1^\#(S^2(h^{(2)})^{*})\xi^\#,\pi_1(S(h^{(1)}))\eta\rangle= \langle \pi_1(S^2(h^{(2)})^*)\xi,\pi_1(S(h^{(1)}))\eta\rangle=\varepsilon(h)\langle \xi,\eta\rangle =\varepsilon(h)\langle \xi^\#,\eta\rangle.
\end{align*}
Therefore, $\pi_1^\#$ on $\H_1^\#$ is conjugate to $\pi_1$ on $\H_1$.

If $\H_2 = \H_1^\perp=\H_1$, we have the form (1) with $n=2$. If $\H_2 = \H_1^\perp \neq \H_1$, define $\pi_2: A\to \End(\H_2/\H_1)$. We then have the form with $n=3$. In either case, $\pi_n = \pi_1^\#$ is conjugate to $\pi_1$.

\end{proof}

We shall call the form \eqref{decomposition} constructed in the proof of Theorem \ref{thm:araki} the \textit{Araki form}.

\section{Examples}\label{sec:ex}
In this section, we discuss examples of Hopf $*$-invariant inner products on indecomposable modules with a simple submodule. Such modules automatically satisfy the conditions of Theorem \ref{thm:araki}. We construct all possible invariant inner products in a few cases. Moreover, we investigate at how the indecomposable modules decompose according to Theorem \ref{thm:araki}.

Small quantum groups $U_q sl(2)$ at a root of unity have a compatible $*$-operation. Invariant Hermitian forms on the simple modules over the $q$-deformed enveloping algebra of $sl(2)$ were studied in \cite[Chap. VI]{kassel}. This classification descends to $U_q sl(2)$. More generally, it is known that $U_q \g$-modules always have $U_q \g$-invariant inner products for any simple Lie group $\g$. They can be chosen to be compatible with the modular structure of the representation category \cite{kirillov1996inner}. We place our focus on projective indecomposable $U_q sl(2)$-modules in the case where $q$ is an odd root of unity. In this case, we show that the $U_q sl(2)$-invariant Hermitian forms are always 2-dimensional as a real vector space.

Generalized Taft Hopf algebras $H_{n,d}(q)$, introduced in \cite{1494155620040901}, also have a compatible $*$-operation. Only some choices of $n,d$ give rise to non-degenerate inner products. We exhibit all possible $H_{n,d}(q)$-invariant Hermitian forms on all indecomposable $H_{n,d}(q)$-modules. Depending on the module, the space of $H_{n,d}(q)$-invariant Hermitian forms is either zero- or one-dimensional. We also give a classification of which $n, d$ give rise to non-degenerate inner products.

\subsection{Small quantum groups $U_qsl(2)$}\label{sec:uqsl2}
Let $l\geq 3$ be odd and $q$ be a primitive $l^{th}$ root of unity. As an algebra, the small quantum group $U_qsl(2)$ is generated by $K,E,$ and $F$ with relations:
\begin{align*}
    E^l&=0,&F^l&=0,&K^l&=1,\\
    KE&=q^2 EK,&KF &= q^{-2}FK, & [E,F] &= \frac{K - K^{-1}}{q - q^{-1}}.
\end{align*}
The associated PBW basis of $U_qsl(2)$ is $\{E^mF^nK^k | 0\leq m,n,k\leq l\}$. The Hopf algebra structure on $U_qsl(2)$ is generated by
\begin{align*}
    \Delta(E) &= 1\otimes E + E\otimes K, & \Delta(F) &= K^{-1}\otimes F + F\otimes 1, & \Delta(K) &= K\otimes K,\\
    \epsilon(E) &= 0,&\epsilon(F)&=0,&\epsilon(K)&=1,\\
    S(E) &= -EK^{-1}, & S(F) &= -KF, & S(K) &= K^{-1}.
\end{align*}
There is a unique $*$ which makes $U_qsl(2)$ a Hopf $*$-algebra. This $*:U_qsl(2)\to U_qsl(2)$ fixes the generators:
$$E^* = E, \quad F^* = F,\quad K^* = K.$$

There are $l-1$ projective indecomposable $U_q sl(2)$-modules up to isomorphism, only one of which is simple. All of these have dimension $2l$; for each $1\leq r \leq l-1$, the projective indecomposable $U_q sl(2)$-module $P_r$ has basis $\{x_k^{(r)},y_k^{(r)}\}_{k=0}^{l-r-1} \cup \{a_n^{(r)},b_n^{(r)}\}_{n=0}^{r-1}$. The actions of the generators are as follows:
\begin{align*}
    E\cdot x_k^{(r)} &= [k][l-r-k]x_{k-1}^{(r)}, & F\cdot x_k^{(r)} &= x_{k+1}^{(r)}, & K\cdot x_k^{(r)} &= q^{l-r-1-2k}x_k^{(r)},\\
    E\cdot x_{0}^{(r)} &= 0, & F\cdot x_{l-r-1}^{(r)} &= a_{0}^{(r)},\\
    E\cdot y_k^{(r)} &= [k][l-r-k]y_{k-1}^{(r)}, & F\cdot y_k^{(r)} &= y_{k+1}^{(r)}, & K\cdot y_k^{(r)} &= q^{l-r-1-2k}y_k^{(r)},\\
    E\cdot y_k^{(r)} &= a_{r-1}^{(r)}, & F\cdot y_{l-r-1}^{(r)} &= 0,\\
    E\cdot a_n^{(r)} &= [n][r-n]a_{n-1}^{(r)}, & F\cdot a_n^{(r)} &= a_{n+1}^{(r)}, & K\cdot a_n^{(r)} &= q^{r-1-2n}a_n^{(r)},\\
    E\cdot a_{0}^{(r)} &= 0, & F\cdot a_{r-1}^{(r)} &= 0,\\
    E\cdot b_n^{(r)} &= [n][r-n]b_{n-1}^{(r)} + a_{n-1}^{(r)}, & F\cdot b_n^{(r)} &= b_{n+1}^{(r)}, & K\cdot b_n^{(r)} &= q^{r-1-2n}b_n^{(r)},\\
    E\cdot b_{0}^{(r)} &= x_{l-r-1}^{(r)}, & F\cdot b_{r-1}^{(r)} &= y_{0}^{(r)}.
\end{align*}
Here, in the $E$-actions, $k$ ranges from 1 to $l-r-1$ and $n$ ranges from $1$ to $r-1$. In the $F$-actions, $k$ ranges from 0 to $l-r-2$ and $n$ ranges from 0 to $r-2$. In the $K$-actions, $k$ ranges from 0 to $l-r-1$ and $n$ ranges from $0$ to $r-1$.

The $r$-dimensional $U_qsl(2)$-submodule $V_r$ of $P_r$ generated by $\{a_n^{(r)}\}_{n=0}^{r-1}$ is simple. All simple $U_qsl(2)$-modules are of this form. The $(2l-r)$-dimensional $U_qsl(2)$-submodule $W_r\subset P_r$ generated by $\{a_n^{(r)}\}_{n=0}^{r-1}\cup \{x_k^{(r)}, y_k^{(r)}\}_{k=0}^{l-r-1}$ is indecomposable but neither projective nor simple.

\begin{theorem}\label{Thm2}
    Every projective indecomposable $U_qsl(2)$-module $P_r, 1\leq r\leq l-1$ has compatible $U_qsl(2)$-invariant inner products. In particular, a Hermitian form $\langle-,-\rangle :\bar P_r\otimes P_r\to \C$ is $U_qsl(2)$-invariant if and only if there are $\alpha,\beta\in\mathbb{R}$ such that
    \begin{enumerate}[label=(\roman*)]
        \item $\langle a_n^\R,b_m^\R\rangle = \langle b_m^\R,a_n^\R\rangle = \alpha$ whenever $n+m = r-1$\label{relation:1},
        \item $\langle x_k^\R,y_j^\R\rangle = \langle y_j^\R,x_k^\R\rangle = \alpha$ whenever $k+j = l-r-1$\label{relation:2},
        \item $\langle b_n^\R,b_m^\R\rangle = \beta$ whenever $n+m = r-1$\label{relation:3},
        \item $\langle \eta,\xi\rangle = 0$ for any other $\eta,\xi\in\{x_k^\R,y_k^\R\}_{k=0}^{l-r-1} \cup \{a_n^\R, b_n^\R\}_{n=0}^{r-1}$\label{relation:4}.
\end{enumerate}
\end{theorem}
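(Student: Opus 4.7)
The plan is to invoke Proposition \ref{prop:beggs}(iii): since $E^{*}=E$, $F^{*}=F$, and $K^{*}=K$, a Hermitian form $\langle-,-\rangle$ on $P_r$ is $U_{q}sl(2)$-invariant iff each of $E$, $F$, $K$ is self-adjoint under $\langle-,-\rangle$. I will then impose these three self-adjointness conditions in turn to pin down the form, and finally verify the converse.

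I begin with $K$-self-adjointness. Since $K$ acts diagonally on the basis by powers of $q$ and $\overline{q^{m}}=q^{-m}$, the identity $\langle K\xi,\eta\rangle=\langle\xi,K\eta\rangle$ forces $\langle\xi,\eta\rangle=0$ whenever the $K$-eigenvalue exponents of $\xi,\eta$ fail to sum to $0\pmod l$. Because $l$ is odd, $2$ is a unit mod $l$, so this exponent equation can be halved; a case check across the eight pairing types among $\{x_k^\R,y_k^\R,a_n^\R,b_n^\R\}$ (using the ranges $0\le k,j\le l-r-1$ and $0\le n,m\le r-1$) leaves only the four $(a,b)$-block pairings with $n+m=r-1$ and the four $(x,y)$-block pairings with $k+j=l-r-1$ as potentially nonzero, and kills every cross-block pairing. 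Next, $F$-self-adjointness combined with the bulk actions $F a_n^\R=a_{n+1}^\R$, $F b_n^\R=b_{n+1}^\R$, $F x_k^\R=x_{k+1}^\R$, $F y_k^\R=y_{k+1}^\R$ gives the shift identity $\langle\xi_i,\eta_j\rangle=\langle\xi_{i-1},\eta_{j+1}\rangle$, so each surviving family is constant along its antidiagonal; label the eight constants $\alpha_{ab},\alpha_{ba},\gamma_a,\beta$ on the $(a,b)$-block and $\alpha_{xy},\alpha_{yx},\gamma_x,\gamma_y$ on the $(x,y)$-block.

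The boundary and cross-block actions then collapse the remaining freedom. The relation $F a_{r-1}^\R=0$ paired with the wrap-around $F x_{l-r-1}^\R=a_0^\R$ in $\langle F x_{l-r-1}^\R,a_{r-1}^\R\rangle=\langle x_{l-r-1}^\R,F a_{r-1}^\R\rangle$ forces $\gamma_a=\langle a_0^\R,a_{r-1}^\R\rangle=0$; dually, $E x_0^\R=0$ together with $E b_0^\R=x_{l-r-1}^\R$ yields $\gamma_x=0$, and $F y_{l-r-1}^\R=0$ with $F b_{r-1}^\R=y_0^\R$ yields $\gamma_y=0$. The identity $\langle F x_{l-r-1}^\R,b_{r-1}^\R\rangle=\langle x_{l-r-1}^\R,F b_{r-1}^\R\rangle$ forces $\alpha_{ab}=\alpha_{xy}$, and $\langle E y_0^\R,b_0^\R\rangle=\langle y_0^\R,E b_0^\R\rangle$ forces $\alpha_{ab}=\alpha_{yx}$. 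Hermiticity supplies $\alpha_{ba}=\overline{\alpha_{ab}}$, $\alpha_{yx}=\overline{\alpha_{xy}}$, and $\beta=\overline\beta$; combining these with the two $\alpha$-equalities above forces $\alpha:=\alpha_{ab}=\alpha_{ba}=\alpha_{xy}=\alpha_{yx}\in\mathbb{R}$ and $\beta\in\mathbb{R}$. For the converse, given real $\alpha,\beta$, I define $\langle-,-\rangle$ by (i)--(iv); this form is Hermitian by inspection, every $q$-number $[n]$ and $[l-r-k]$ appearing in the action is real (since $\overline{[n]}=[n]$), and $E$-, $F$-, $K$-self-adjointness on basis pairs is a finite mechanical check. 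The main obstacle I anticipate is the index bookkeeping across both antidiagonals and the two blocks; in particular, producing the reality of $\alpha$ requires invoking \emph{both} wrap-around actions $F x_{l-r-1}^\R=a_0^\R$ and $E y_0^\R=a_{r-1}^\R$ together with Hermiticity, as either one alone leaves an overall phase free.
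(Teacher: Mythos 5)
Your proposal is correct and follows essentially the same route as the paper: the $K$-action pins the support to the two antidiagonals and kills cross-block pairings, the bulk $F$-action gives constancy along antidiagonals, the boundary/wrap-around $E$- and $F$-actions kill $\langle a,a\rangle$, $\langle x,x\rangle$, $\langle y,y\rangle$ and identify the $\alpha$'s, and Hermiticity forces $\alpha,\beta\in\mathbb{R}$. The only difference is that you phrase invariance via the adjoint condition of Proposition \ref{prop:beggs}(iii) on the $*$-fixed generators rather than the coproduct form used in the paper's displayed equations; these are equivalent by that proposition and yield the same linear constraints.
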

\begin{proof}

    Observe that, for every $k=0,\dots, l-r-1$, the action of $K$ on $x_k^\R$ and on $y_k^\R$ behave identically. Moreover, for $n=0,\dots, r-1$, the action of $K$ on $a_n^\R$ and $b_n^\R$ also behave identically. For conciseness, we let $\xi_k\in \{x_k^\R,y_k^\R\}$ and $\mu_n\in\{a_n^\R, b_n^\R\}$, where $k=0,\dots, l-r-1$ and $n=0,\dots, r-1$. By $U_qsl(2)$-invariance, the $K$-action induces the following relations: 
    \begin{align}
        q^{-2(l-r-1-k-j)}\langle \xi_k,\xi_j\rangle &= \langle K\xi_k, K^{-1}\xi_j \rangle  = \epsilon(K)\langle \xi_k,\xi_j\rangle=\langle \xi_k,\xi_j\rangle\label{eq:3}\\
        q^{-l+2(k+n+1)}\langle \xi_k,\mu_n\rangle &= \langle K\xi_k, K^{-1}\mu_n \rangle  = \epsilon(K)\langle \xi_k,\mu_n\rangle=\langle \xi_k,\mu_n\rangle\label{eq:4}\\
        q^{-2(r-1-n-m)}\langle \mu_n,\mu_m\rangle &= \langle K\mu_m, K^{-1}\mu_n \rangle  = \epsilon(K)\langle \mu_n,\mu_m\rangle=\langle \mu_n,\mu_m\rangle\label{eq:5}
    \end{align}
    
    Assuming $\langle \xi_k,\xi_j\rangle\neq 0$, Equation \eqref{eq:3} implies that $l$ divides $2(l-r-1-k-j)$. Since $l$ is an odd number, $l$ must divide $l-r-1-k-j$. Therefore, we have $r+1+k+j = \lambda l$ for some integer $\lambda$. This implies $j+k = \lambda l -r-1 \equiv l-r-1\pmod{l}$. The range $0\leq j+k\leq 2l-2r-2$ of the indices $j$ and $k$ for $\xi_j$ and $\xi_k$ simplifies the relation to $j+k = l-r-1$. A similar argument with Equation \eqref{eq:5} implies $n+m = r-1$ when $\langle \mu_n, \mu_m\rangle\neq 0$.
    
    Equation \eqref{eq:4} implies that $l$ divides $l-2(k+n+1)$. Since $l$ is odd, $l$ divides $k+n+1$. However, note that the indices $k$,$n$ are constrained by $1\leq k+n+1 \leq l-1$. This implies $l$ cannot divide $k+n+1$, and thus $\langle \xi_k,\mu_n\rangle = 0$ for any $k,n$.
    
    By $U_qsl(2)$-invariance, the $F$-action induces the following relations for $0\leq j,k\leq l-r-2$ and $0\leq n,m\leq r-2$:
    \begin{multline}
        \langle K^{-1}FKx_k^\R, Kx_j^\R\rangle + \langle  x_k^\R, -KFx_j^\R\rangle=
        q^{l-r-j-3}\langle x_{k+1}^\R,x_{j}^\R\rangle - q^{l-r-j-3}\langle x_{k}^\R,x_{j+1}^\R\rangle=0,\label{eq:6}
    \end{multline}
    It follows from Equation \eqref{eq:6} and induction that $\langle x_k^\R,x_{j}^\R\rangle = \langle x_{k'}^\R,x_{j'}^\R\rangle$ whenever $k+j=k'+j'$. One can obtain similar relations for $\langle y_k^\R,y_{j}^\R\rangle$ and $\langle x_k^\R,y_{j}^\R\rangle$, as well as pairs in $\{a_n^\R, b_n^\R\}_{n=0}^{r-1}$.
    Combined with the results from the $K$-action, the $E$-action on the non-extreme indices induce the same constraints. Finally, consider $E$- and $F$-actions on the following extreme indices and their implications: 
    \begin{align}
    E\cdot\langle x_0^\R,b_0^\R\rangle = 0 &\Rightarrow \langle x_0^\R,x_{l-r-1}^\R\rangle=0,\label{eq:8}\\
    F\cdot\langle y_{l-r-1}^\R,b_0^\R\rangle = 0 &\Rightarrow \langle y_{l-r-1}^\R,y_0^\R\rangle = 0,\label{eq:9}\\
    F\cdot\langle x_{l-r-1}^\R,a_{r-1}^\R\rangle = 0&\Rightarrow\langle a_0^\R,a_{r-1}^\R\rangle = 0,\label{eq:10}\\
    E\cdot \langle y_0^\R, b_{r-1}^\R\rangle = 0&\Rightarrow\langle a_0^\R,b_{r-1}^\R\rangle = \langle y_0^\R, x_{l-r-1}^\R\rangle,\label{eq:11}\\
    F\cdot \langle x_{l-r-1}^\R,b_{r-1}^\R\rangle= 0&\Rightarrow \langle a_0^\R,b_{r-1}^\R\rangle = \langle x_{l-r-1}^\R,y_0^\R\rangle\label{eq:12}.
    \end{align}
    We now collect all of our information together. Notice that the additional constraints do not affect products of the form $\langle b_n, b_m\rangle$, so we have \ref{relation:3}. Equations \eqref{eq:8} and \eqref{eq:9} imply that $\langle x_k^\R, x_j^\R\rangle = \langle y_k^\R, y_j^\R\rangle = 0$ for all $k,j$. Equation \eqref{eq:10} implies that $\langle a_n^\R,a_m^\R\rangle = 0$ for all $n,m$. Combined with Equation \eqref{eq:11} and Equation \eqref{eq:12}, we have \ref{relation:1}, \ref{relation:2}, and \ref{relation:4}. 
\end{proof}

\begin{proposition}
    For all $U_qsl(2)$-invariant inner products, the Araki form of $P_r\supset V_r$ is $P_r\supset W_r\supset V_r$. 
\end{proposition}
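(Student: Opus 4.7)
The plan is to compute the polar $V_r^\perp \subset P_r$ explicitly using the classification in Theorem \ref{Thm2}, and then invoke the dichotomy in Theorem \ref{thm:araki}(2). Setting $\H = P_r$ and $\H_1 = V_r$, the Araki form is $P_r \supset V_r$ (with $n=2$) exactly when $V_r^\perp = V_r$, and $P_r \supset V_r^\perp \supset V_r$ (with $n=3$) otherwise. So the task reduces to identifying $V_r^\perp$ for an arbitrary $U_qsl(2)$-invariant inner product.

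First, I would observe that among all invariant Hermitian forms parameterized by $(\alpha,\beta)\in\mathbb{R}^2$ in Theorem \ref{Thm2}, only those with $\alpha \neq 0$ are non-degenerate. If $\alpha = 0$, relations \ref{relation:1} and \ref{relation:4} together imply that every $a_n^\R$ pairs trivially with every basis vector of $P_r$, so $V_r$ would lie in the radical, contradicting non-degeneracy. Hence we may assume $\alpha \neq 0$ throughout.

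Next, I would compute $V_r^\perp$ by scanning the list of basis pairings. An element $\xi \in P_r$ lies in $V_r^\perp$ iff $\langle \xi, a_n^\R\rangle = 0$ for every $n$. Inspecting the four relations of Theorem \ref{Thm2}, the only basis vectors whose pairing with some $a_n^\R$ is nonzero are the $b_m^\R$'s, each pairing with $a_{r-1-m}^\R$ with value $\alpha$. Since $\alpha\neq 0$, this condition is equivalent to every $b_m^\R$-coefficient of $\xi$ vanishing, i.e., $\xi \in \spa\{x_k^\R, y_k^\R, a_n^\R\} = W_r$. Hence $V_r^\perp = W_r$, independent of the choice of $(\alpha,\beta)$.

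Finally, since $\dim V_r = r < 2l-r = \dim W_r < 2l = \dim P_r$ (using $1 \leq r \leq l-1$), both inclusions $V_r \subsetneq W_r \subsetneq P_r$ are strict, so Theorem \ref{thm:araki}(2) yields the case $n=3$ with Araki filtration $P_r \supset W_r \supset V_r$. The only real hurdle here is bookkeeping: it is the completeness of the list in Theorem \ref{Thm2} that guarantees no hidden pairings with $V_r$ survive to enlarge $V_r^\perp$ beyond $W_r$, and everything else follows directly from the definitions.
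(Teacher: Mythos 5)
Your proposal is correct and follows essentially the same route as the paper: compute $V_r^\perp$ from the classification in Theorem \ref{Thm2}, identify it with $W_r$, and conclude the length-$3$ Araki form since $W_r \neq V_r$. Your extra observations --- that non-degeneracy forces $\alpha \neq 0$ and that the inclusions are strict by a dimension count --- are details the paper leaves implicit, but they do not change the argument.
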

\begin{proof}
    Theorem \ref{Thm2} suggests that $V_r^\perp$ is generated by $\{x_k^\R,y_k^\R\}_{k=0}^{l-r-1}\cup\{a_n^\R\}_{n=0}^{r-1} \cong W_r$. Since $W_r$ does not coincide with $V_r$, we have an Araki form of length $3$:
    $$P_r \supset W_r \supset V_r.$$
\end{proof}

\begin{proposition}
    $P_r/W_r \cong V_r$ and $W_r/V_r\cong V_{l-r}\oplus V_{l-r}$ (where the two summands are orthogonal).
\end{proposition}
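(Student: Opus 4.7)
The plan is to verify both isomorphisms by reading off the action formulas on cosets, and then to handle the orthogonality claim by a change of basis. I would first note that the form descends to $W_r/V_r$: by Theorem~\ref{Thm2}, $V_r^\perp = W_r$ (the only nonzero pairings involving the $a_n^\R$'s are against the $b_m^\R$'s), so $\langle V_r,W_r\rangle = \langle W_r,V_r\rangle = 0$ and the induced form on $W_r/V_r$ is well-defined. For $P_r/W_r \cong V_r$, I would take $\{\overline{b_n^\R}\}_{n=0}^{r-1}$ as a basis of the quotient. Reducing the $b$-action formulas modulo $W_r$ kills every $a_n^\R$, $x_k^\R$, and $y_k^\R$: the term $a_{n-1}^\R$ in $E\cdot b_n^\R$ vanishes, as do the boundary values $E\cdot b_0^\R = x_{l-r-1}^\R$ and $F\cdot b_{r-1}^\R = y_0^\R$. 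What remains are exactly the defining relations of $V_r$ with $\overline{b_n^\R}$ in the role of $a_n^\R$.

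For $W_r/V_r$ I would take $\{\overline{x_k^\R},\overline{y_k^\R}\}_{k=0}^{l-r-1}$ as a basis. Each of $\spa\{\overline{x_k^\R}\}$ and $\spa\{\overline{y_k^\R}\}$ is a $U_q sl(2)$-submodule -- the only nontrivial points to check are $F\cdot\overline{x_{l-r-1}^\R} = \overline{a_0^\R} = 0$ and $E\cdot\overline{y_0^\R} = \overline{a_{r-1}^\R} = 0$ -- and each is isomorphic to $V_{l-r}$ via $\overline{x_k^\R},\overline{y_k^\R}\mapsto a_k^{(l-r)}$, giving the module-level decomposition $W_r/V_r \cong V_{l-r}\oplus V_{l-r}$. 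By Theorem~\ref{Thm2}, however, $\langle \overline{x_k^\R},\overline{y_j^\R}\rangle = \alpha\,\delta_{k+j,\,l-r-1}$ is nonzero when $\alpha \neq 0$, so this $x$-vs-$y$ decomposition is not orthogonal in general. I would remedy this with the change of basis $u_k := \overline{x_k^\R} + \overline{y_k^\R}$, $v_k := \overline{x_k^\R} - \overline{y_k^\R}$. Since $E,F,K$ act identically on the $x$- and $y$-strands in the quotient, each of $\spa\{u_k\}$ and $\spa\{v_k\}$ is again a copy of $V_{l-r}$, and expanding sesquilinearly with Theorem~\ref{Thm2} gives
$$\langle u_k,v_j\rangle = \langle \overline{x_k^\R},\overline{x_j^\R}\rangle - \langle \overline{x_k^\R},\overline{y_j^\R}\rangle + \langle \overline{y_k^\R},\overline{x_j^\R}\rangle - \langle \overline{y_k^\R},\overline{y_j^\R}\rangle = 0 - \alpha\delta_{k+j,l-r-1} + \alpha\delta_{k+j,l-r-1} - 0 = 0$$
for all $k,j$, so $\spa\{u_k\}\perp\spa\{v_k\}$ as required.

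The only subtle step is spotting that the naive $x$-vs-$y$ decomposition fails to be orthogonal and recognizing that the induced form restricts to a rank-2 hyperbolic pairing on the two-dimensional highest-weight space $\spa\{\overline{x_0^\R},\overline{y_0^\R}\}$; once a null basis is chosen there, propagating by the $F$-action produces the desired orthogonal direct-sum decomposition of $W_r/V_r$. Every other step is a routine bookkeeping calculation on the chosen bases.
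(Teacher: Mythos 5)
Your proof is correct and takes essentially the same route as the paper: the paper's isomorphism $\phi$ sends the two copies of $V_{l-r}$ to $\spa\{(x_n+y_n)+V_r\}$ and $\spa\{(x_n-y_n)+V_r\}$, which is exactly your $u_k,v_k$ basis, and orthogonality is likewise read off from Theorem~\ref{Thm2}. You additionally spell out the $P_r/W_r\cong V_r$ isomorphism and the failure of orthogonality for the naive $x$-versus-$y$ splitting, details the paper leaves implicit.
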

\begin{proof}
    The map $\phi: V_{l-r}\oplus V_{l-r} \to W_r/V_r$ given by $(v_n,0)\mapsto (x_n+y_n)+V_r$, and $ (0,v_n)\mapsto (x_n-y_n)+V_r$ is an an $U_qsl(2)$-module isomorphism. By Theorem \ref{Thm2}, the summands are indeed orthogonal.
\end{proof}

\subsection{Generalized Taft Hopf algebras $H_{n,d}(q)$}\label{sec:taft}
Fix $n,d \geq 2$, such that $d|n$. Let $q$ be a primitive $d^{th}$ root of unity. Throughout the section, let $m = n/d$. $H_{n,d}(q)$, as an algebra, is generated by two elements $g$ and $h$ subject to the following relations:
$$g^n = 1, \quad h^d=0, \quad hg=qgh.$$
The associated PBW basis of $H_{n,d}(q)$ is $\{g^ih^j|0\leq i\leq n-1, 0\leq j\leq d-1\}$. The Hopf algebra structure on $H_{n,d}(q)$ given by
\begin{align*}
    \Delta(g) &= g\otimes g, & \Delta(h) &= 1\otimes h+h\otimes g,\\
    \epsilon(g) &= 1, & \epsilon(h) &=0,\\
    S(g) &= g^{-1}, & S(h) &= -q^{-1}g^{-1}h.
\end{align*}
There is a unique $*$ which makes $H_{n,d}(q)$ a Hopf $*$-algebra. Similarly to $U_q sl(2)$, this $*: H_{n,d}(q) \to H_{n,d}(q)$ fixes the generators:
$$g^* = g,\quad h^* = h.$$

Let $\omega$ be a primitive $n^{th}$ root of unity so that $\omega^m=q$. Following the notation of \cite{li2012greenringsgeneralizedtaft}, for $1\leq l\leq d$ and $i\in \mathbb{Z}_n$, $M(l,i)$ is an $H_{n,d}(q)$-module with basis $\{v_0, v_1, \cdots, v_{l-1}\}$ and the following actions:
\begin{align*}
    g\cdot v_j &= \omega^iq^{-j}v_j, & h\cdot v_j &= v_{j+1}\\
    & & h\cdot v_{l-1} &= 0.
\end{align*}
Here, in the $g$-actions, $j$ ranges from $0$ to $l-1$. In the $h$-actions, $j$ ranges from $0$ to $l-2$.

For any $1\leq l\leq d, i\in\mathbb{Z}_n$, $M(l,i)$ is an indecomposable $H_{n,d}(q)$-module. In fact, the set $\{M(l,i) | i\in \mathbb{Z}_n, 1\leq l \leq d\}$ forms a complete list of non-isomorphic indecomposable $H_{n,d}(q)$-modules \cite{li2012greenringsgeneralizedtaft}. Additionally, $M(l,i)$ is projective if and only if $l=d$, and $M(l,i)$ is simple if and only if $l=1$.

\begin{theorem}\label{thm:3}
    Let $l\geq 2$ and $i\in\Z_n$. The $H_{n,d}(q)$-module $M(l, i)$ has compatible $H_{n,d}(q)$-invariant inner products if and only if $2i\equiv m(l-1)\pmod{n}$. In particular, a Hermitian form $\langle -,- \rangle: \overline{M(l,i)}\otimes M(l,i) \to \mathbb{C}$ is $H_{n,d}(q)$-invariant if and only if there is $\alpha\in \mathbb{R}$, such that
    \begin{enumerate}[label=(\roman*)]
        \item $\langle v_j, v_k\rangle = \alpha$ if $m(j+k)\equiv 2i \pmod{n}$ and $j+k<l$,
        \item $\langle v_j,v_k\rangle = 0$ otherwise.
    \end{enumerate}
\end{theorem}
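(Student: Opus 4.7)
The plan is to follow the same two-step template used in Theorem \ref{Thm2}: extract diagonal-type constraints from the grouplike generator $g$, then use the nilpotent generator $h$ to pin down the remaining pairings, and finally analyze non-degeneracy via the structure of the resulting Gram matrix.

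First, I would apply Proposition \ref{prop:beggs}(iii) with $g^{*} = g$ to $\xi = v_j$ and $\eta = v_k$. The relation $\langle g v_j, v_k\rangle = \langle v_j, g v_k\rangle$ becomes $\omega^{-i} q^{j}\langle v_j, v_k\rangle = \omega^{i}q^{-k}\langle v_j, v_k\rangle$ by conjugate linearity in the first slot. When $\langle v_j, v_k\rangle \neq 0$, this forces $\omega^{2i} = q^{j+k} = \omega^{m(j+k)}$, equivalently $m(j+k) \equiv 2i \pmod{n}$.

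Next, $h^{*} = h$ yields $\langle h v_j, v_k\rangle = \langle v_j, h v_k\rangle$. For $0 \leq j, k \leq l-2$ this reads $\langle v_{j+1}, v_k\rangle = \langle v_j, v_{k+1}\rangle$, so the pairing depends only on $s := j+k$; call this common value $T_s$. The boundary case $hv_{l-1} = 0$ gives $\langle v_{l-1}, v_{j+1}\rangle = 0$ and $\langle v_{j+1}, v_{l-1}\rangle = 0$ for $0 \leq j \leq l-2$, and iterating the interior relation propagates $T_s = 0$ to all $s \geq l$. Hermiticity forces $T_s \in \mathbb{R}$. Combined with the $g$-constraint, $T_s$ can be nonzero only for $0 \leq s \leq l-1$ with $ms \equiv 2i \pmod{n}$. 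Since $l \leq d$ and any two such $s$ differ by a multiple of $d = n/m$, there is at most one such $s$ in $[0, l-1]$, giving a single real parameter $\alpha$. Conversely, one checks directly that any form satisfying (i) and (ii) is $H_{n,d}(q)$-invariant, yielding the ``if'' direction.

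For non-degeneracy, I would examine the Gram matrix $M_{jk} = T_{j+k}$. Since $T_s = 0$ for $s \geq l$, reversing the column order makes $M$ upper triangular with all diagonal entries equal to $T_{l-1}$, so $\det M = \pm T_{l-1}^{l}$. Hence the form is non-degenerate if and only if $T_{l-1} \neq 0$, which, combined with the $g$-constraint, occurs precisely when $m(l-1) \equiv 2i \pmod{n}$. The main subtlety, rather than any serious obstacle, lies in keeping the index bookkeeping straight and invoking $l \leq d$ at the right moment to force uniqueness of $s$; the remaining steps are direct imitations of the computation in Theorem \ref{Thm2}.
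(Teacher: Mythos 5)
Your proposal is correct and follows essentially the same route as the paper's proof: the $g$-action forces $m(j+k)\equiv 2i\pmod{n}$, the $h$-action makes the pairing depend only on $s=j+k$ and kills all $s\geq l$, and divisibility by $d=n/m$ yields the single real parameter $\alpha$. Your anti-triangular Gram-matrix computation giving $\det M=\pm T_{l-1}^{l}$ is a slightly more explicit treatment of the non-degeneracy criterion than the paper's one-directional contrapositive argument, but it is not a genuinely different method.
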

Note that the conditions $m(j+k)\equiv 2i\pmod{n}$ and $j+k<l$ can be satisfied by at most one choice of $j+k$. It follows that the space of invariant Hermitian forms on $M(l, i)$ is either zero- or one-dimensional.

\begin{proof}
    Assuming $H_{n,d}(q)$-invariance, the action of $g$ induces
    \begin{equation}
        \omega^{-2i}q^{j+k}\langle v_j,v_k\rangle = \langle v_j,v_k\rangle \label{eq:13}
    \end{equation}
    for $0\leq j,k\leq l-1$. The action of $h$ induces
    \begin{equation}
        \langle v_j,v_{k}\rangle = \omega^{-2i}q^{j+k}\langle v_{j-1},v_{k+1}\rangle\label{eq:14}
    \end{equation}
    for $1\leq j \leq l-1,0\leq k\leq l-2$. 

    Equation \eqref{eq:14} indicates that $\langle v_j,v_k\rangle = 0$ if $j+k \geq l$, so we only focus on $j+k<l$ in the following part. Assume $\langle v_j,v_k\rangle \neq 0$, Equation \eqref{eq:13} implies that $\omega^{-2i}q^{j+k} = 1$, namely $m(j+k)\equiv 2i\pmod{n}$. Notice that for each chosen $i$, at most one specific $j+k$ satisfies the relation. Suppose a specific $j+k$ satisfies this relation, Equation \eqref{eq:14} implies that $\langle v_j,v_{k+1}\rangle = \langle v_{j+1},v_k\rangle$. Therefore, we can conclude that $\langle v_j,v_k\rangle\in\mathbb{R}$ depends only on the value of $j+k$ and can be nonzero only if $m(j+k)\equiv 2i\pmod{n}$ and $j+k<l$.

    We claim that the Hermitian form is non-degenerate if and only if $j+k = l-1$.
    Suppose by contrapositive, $j+k<l-1$ and consider $\langle v_{l-1}, -\rangle$. There does not exist a $k$ in range such that $l-1+k < l-1$. Therefore, $\langle v_{l-1}, v_k\rangle = 0$ for all $k$. Thus, the $H_{n,d}(q)$-invariant Hermitian form is degenerate.  
\end{proof}

    Note that $\spa \{v_{l-1}\} \subset M(l,i)$ is isomorphic to $M(1,i-m(l-1))$. We can therefore consider the Araki form of $M(l,i)$ with respect to the irreducible submodule $M(1,i-m(l-1))$.

\begin{proposition}
    For any $H_{n,d}(q)$-invariant inner product, the Araki form of $M(l,i)\supset M(1,i-m(l-1))$ is 
    \begin{enumerate}
        \item  $M(l,i)\supset M(1,i-m(l-1))$ if $l=2$,
        \item  $M(l,i)\supset M(l-1,i-m)\supset M(1,i-m(l-1))$ if $l>2$,
    \end{enumerate}
    where we consider the second parameter as an element in $\mathbb{Z}_n$.
\end{proposition}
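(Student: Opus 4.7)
The plan is to unpack the Araki construction from the proof of Theorem~\ref{thm:araki}: given $\H_1 = M(1, i-m(l-1)) = \spa\{v_{l-1}\}$ sitting inside $\H = M(l,i)$, the middle (possibly trivial) layer of the Araki form is the polar $\H_1^\perp$, so the whole task reduces to computing $\H_1^\perp$ explicitly and then either collapsing the form to length $2$ (if $\H_1^\perp = \H_1$) or identifying $\H_1^\perp$ with some $M(l',i')$.

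First I would invoke Theorem~\ref{thm:3}: a non-degenerate $H_{n,d}(q)$-invariant inner product on $M(l,i)$ forces $2i \equiv m(l-1)\pmod n$, and the only surviving pairings are $\langle v_j, v_k\rangle = \alpha$ with $j+k = l-1$ (the non-degeneracy argument at the end of that proof rules out strictly smaller values of $j+k$). Pairing against $v_{l-1}$ therefore picks out precisely $v_0$, so
\[
\H_1^\perp = \spa\{v_1, v_2, \ldots, v_{l-1}\}.
\]

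Next I would split into cases. If $l = 2$ then $\spa\{v_1\} = \spa\{v_{l-1}\} = \H_1$, so $\H_1^\perp = \H_1$ and the Araki form collapses to length $2$, yielding $M(2,i) \supset M(1, i-m)$. If $l > 2$ then $\H_1 \subsetneq \H_1^\perp \subsetneq \H$, so the form has length $3$, and it remains to identify the middle layer with $M(l-1, i-m)$. This is a direct structural check: reindex $w_j := v_{j+1}$ for $0 \leq j \leq l-2$ and use the relation $\omega^m = q$ to compute $g \cdot w_j = \omega^{i-m} q^{-j} w_j$ together with $h \cdot w_j = w_{j+1}$ and $h \cdot w_{l-2} = 0$, matching the defining action of $M(l-1, i-m)$ given in Section~\ref{sec:taft}.

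The main obstacle---really the only nontrivial bit---is correctly tracking how the first parameter shifts when passing to the submodule $\H_1^\perp$, since the reindexing introduces a factor $q^{-1} = \omega^{-m}$ into the $g$-weight. As an internal consistency check I would verify that iterating this shift recovers the bottom of the chain: the irreducible submodule of $M(l-1, i-m)$ generated by $w_{l-2} = v_{l-1}$ is indeed $M(1, (i-m) - m(l-2)) = M(1, i-m(l-1)) = \H_1$, confirming the nested chain claimed in case~(2).
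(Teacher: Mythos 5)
Your proposal is correct and follows essentially the same route as the paper: use Theorem \ref{thm:3} to compute the polar $\{v_{l-1}\}^\perp = \spa\{v_1,\dots,v_{l-1}\}$, then split on whether this coincides with $\spa\{v_{l-1}\}$, identifying the middle layer with $M(l-1,i-m)$ when $l>2$. Your explicit reindexing $w_j := v_{j+1}$ and the consistency check on the bottom of the chain just spell out what the paper asserts with ``Notice that $\spa\{v_1,\dots,v_{l-1}\}\cong M(l-1,i-m)$.''
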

\begin{proof}
    Theorem \ref{thm:3} implies $M(1,i+m(1-l))^\perp = \{v_{l-1}\}^\perp = \spa \{v_1, v_2,\dots, v_{l-1}\}\subset M(l,1)$. If $l=2$, $M(1,i-m(l-1))^\perp = \spa \{v_{l-1}\}$ coincides with $M(1,i-m(l-1))$, then we have an Araki form of length 2:
    $$M(l,i)\supset M(1,i-m(l-1)).$$
    If $l>2$, $M(1,i-m(l-1))^\perp = \spa \{v_1, v_2,\dots, v_{l-1}\}$ does not coincide with $M(1,i-m(l-1))$. Notice that $\spa \{v_1, v_2,\dots, v_{l-1}\}\cong M(l-1, i-m)$,  we have an Araki form of length 3:
    $$M(l,i)\supset M(l-1, i-m)\supset M(1,i-m(l-1)).$$
\end{proof}

\begin{proposition}
    If $l>2$, then $M(l,i)/M(l-1,i-m) \cong M(1,i-m(l-1))$ and $M(l-1,i-m)/M(1,i-m(l-1))\cong M(l-2,i-m)$, where the second parameter is considered as an element in $\mathbb{Z}_n$.
\end{proposition}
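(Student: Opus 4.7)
The plan is a direct verification: for each claimed isomorphism I would exhibit explicit coset representatives of the quotient, compute the actions of $g$ and $h$ on them, and match against the defining action formulas for $M(l',i')$ stated at the start of Section \ref{sec:taft}. The key preliminary is to identify the submodules concretely inside the basis of $M(l,i)$. Under the re-indexing $u_k := v_{k+1}$ for $k=0,\dots,l-2$, the subspace $\spa\{v_1,\dots,v_{l-1}\}$ matches the defining action of $M(l-1,i-m)$; moreover $\spa\{v_{l-1}\}$ matches that of $M(1,i-m(l-1))$, since $g\cdot v_{l-1} = \omega^{i}q^{-(l-1)}v_{l-1} = \omega^{i-m(l-1)}v_{l-1}$. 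These identifications are the same ones implicit in the previous proposition.

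For the first isomorphism, the quotient $M(l,i)/M(l-1,i-m)$ is one-dimensional with basis $\bar v_0 := v_0 + M(l-1,i-m)$. I would compute $h\cdot \bar v_0 = \overline{v_1} = 0$ and $g\cdot \bar v_0 = \omega^{i}\bar v_0$. To match the stated target $M(1,i-m(l-1))$, I would appeal to the convention that the second parameter is read in $\Z_n$ together with the constraint $2i\equiv m(l-1)\pmod{n}$ from Theorem \ref{thm:3}, which is in force whenever $M(l,i)$ carries the $H_{n,d}(q)$-invariant inner product assumed in the preceding Araki-form proposition.

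For the second isomorphism, I would take $\bar v_1,\dots,\bar v_{l-2}$ as a basis for $M(l-1,i-m)/M(1,i-m(l-1))$, noting that $\bar v_{l-1}=0$. After re-indexing $\bar u_k := \bar v_{k+1}$ for $k=0,\dots,l-3$, a direct computation gives $g\cdot \bar u_k = \omega^{i-m}q^{-k}\bar u_k$, $h\cdot \bar u_k = \bar u_{k+1}$ for $0\le k<l-3$, and $h\cdot \bar u_{l-3} = \bar v_{l-1} = 0$. These are exactly the defining actions of $M(l-2,i-m)$, which concludes the claim.

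The main obstacle is purely bookkeeping: tracking how the weights shift under the submodule inclusions and quotients, and reconciling the quotient's weight in the first part with the stated isomorphism class using the invariance constraint on $i$. No deeper structural input beyond the explicit classification of indecomposable $H_{n,d}(q)$-modules recalled from \cite{li2012greenringsgeneralizedtaft} is needed.
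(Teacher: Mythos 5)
The paper states this proposition without proof, so your direct verification is the only argument on the table. Your treatment of the second isomorphism (re-indexing $\bar u_k=\bar v_{k+1}$, checking $g\cdot\bar u_k=\omega^{i-m}q^{-k}\bar u_k$, $h\cdot\bar u_k=\bar u_{k+1}$, and $h\cdot\bar u_{l-3}=\bar v_{l-1}=0$) is correct and complete. The problem is the first isomorphism, where your own computation exposes a gap that your proposed fix does not close. You correctly find that $M(l,i)/M(l-1,i-m)$ is spanned by $\bar v_0$ with $g\cdot\bar v_0=\omega^{i}\bar v_0$ and $h\cdot\bar v_0=0$, i.e., the quotient is $M(1,i)$. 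You then say you would ``appeal to the convention that the second parameter is read in $\Z_n$ together with the constraint $2i\equiv m(l-1)\pmod{n}$'' to identify this with $M(1,i-m(l-1))$. Carry that out: under $2i\equiv m(l-1)\pmod{n}$ one gets $i-m(l-1)\equiv -i$, so the target is $M(1,-i)$, and $M(1,i)\cong M(1,-i)$ would force $2i\equiv 0\pmod{n}$, hence $m(l-1)\equiv 0\pmod{n}$, i.e., $d\mid(l-1)$ --- impossible for $2<l\le d$. So the two isomorphism classes never coincide, with or without the inner-product hypothesis, and the reconciliation step fails.

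What is true --- and what your argument actually proves --- is that the quotient is $M(1,i)$, which is \emph{conjugate} to the socle $M(1,i-m(l-1))=M(1,-i)$ in the sense of Definition \ref{def:conjugation}: a nonzero pairing between the one-dimensional modules $M(1,a)$ and $M(1,b)$ is invariant precisely when $a+b\equiv 0\pmod{n}$, and this is exactly what Theorem \ref{thm:araki}(3) predicts for the top layer of the Araki form. (This is a genuine asymmetry of the Taft algebras: unlike the $U_qsl(2)$ case, the head $M(1,i)$ and socle $M(1,i-m(l-1))$ of $M(l,i)$ are non-isomorphic simples.) Your write-up should either prove the corrected statement $M(l,i)/M(l-1,i-m)\cong M(1,i)$, or replace $\cong$ by ``is conjugate to'' in the first claim, rather than asserting that the weights can be reconciled.
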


\section*{Acknowledgements}
The authors thank Q. Zhang and Z. Wang for helpful comments and guidance. Q.T.K. is supported by the National Science Foundation Graduate Research Fellowship Program under Grant No. 2139319. Any opinions, findings, and conclusions or recommendations expressed in this material are those of the authors and do not necessarily reflect the views of the National Science Foundation. 
\bibliographystyle{abbrv}
\bibliography{zbib}

\end{document}